\colorlet{hellgrau}{black!20!white}
\colorlet{dunkelgrau}{black!60!white}
\colorlet{grau}{black!40!white}
\colorlet{bold}{black} %red or black?
\tikzstyle{ledge}=[thick, grau]
\tikzstyle{rededge}=[very thick, bold]
\tikzstyle{lvertex}=[thick,circle,inner sep=0.cm, minimum size=2mm, fill=white, draw=grau]
\tikzstyle{redvx}=[thick,circle,inner sep=0.cm, minimum size=2mm, fill=white, draw=bold]
\tikzstyle{hvertex}=[thick,circle,inner sep=0.cm, minimum size=2mm, fill=white, draw=black]
\tikzstyle{hedge}=[very thick]
\tikzstyle{medge}=[thick]
\tikzstyle{harrow}=[thick,arrows=->]
\tikzstyle{darrow}=[thick,arrows=<-]
\tikzstyle{point}=[draw,circle,inner sep=0.cm, minimum size=1mm, fill=black]
\tikzstyle{pointer}=[thick,->,shorten >=2pt,color=dunkelgrau]
\tikzstyle{facebdry}=[color=auchblau, very thick] % face boundary
\tikzstyle{face}=[facebdry,fill=hellblau]
\tikzstyle{nface}=[color=hellblau,fill=hellblau,thick] % naked face, without boundary
\tikzset{>={latex}}
\tikzstyle{tinyvx}=[thick,circle,inner sep=0.cm, minimum size=1.3mm, fill=white, draw=black]
\tikzstyle{smallvx}=[hvertex,minimum size=1.7mm]
\newcommand{\comment}[1]{}
\newcommand{\N}{\mathbb N}
\newcommand{\sm}{\setminus}
\title{Connectivity of graphs that do not have the edge-Erd\H{o}s-P\'{o}sa property}
\author{Henning Bruhn and Raphael Steck}
\date{\today}
\theoremstyle{plain}
\newtheorem{theo}{Theorem}
\newtheorem{rem}[theo]{Remark}
\begin{document}

\maketitle

\begin{abstract}
We show that we can assume graphs that do not have the edge-Erd\H{o}s-P\'{o}sa property to be connected. Then we strengthen this result to $2$-connectivity under the additional assumptions of a minor-closed property and a generic counterexample.
\end{abstract}

%\setstretch{1.1}

%\tableofcontents

%\section{Connectivity Results}

%Throughout, we will use standard notation as introduced in Diestel's textbook \cite{diestel2006}.
A class $\mathcal{F}$ has the \emph{edge-Erd\H{o}s-P\'{o}sa property} if there exists a function $f: \N \rightarrow \mathbb{R}$ such that for every graph $G$ and every integer $k$, there are $k$ edge-disjoint graphs in $G$ each isomorphic to some graph in $\mathcal{F}$ or there is an edge set $X \subseteq E(G)$ of size at most $f(k)$ meeting all subgraphs in $G$ isomorphic to some graph in $\mathcal{F}$. The edge set $X$ is called the \emph{hitting set}.
If we replace vertices with edges in the above definition, that is, we look for a vertex hitting set or vertex-disjoint graphs, then we obtain the \emph{vertex-Erd\H{o}s-P\'{o}sa property}.
The class $\mathcal{F}$ that is studied in this paper arises from taking minors: For a fixed graph $H$, we define the set 
\[
\mathcal{F}_H = \{ G \, | \, H \text{ is a minor of } G\}.
\]

In other words, $\mathcal{F}_H$ is the set of \emph{$H$-expansions}. The vertex-Erd\H{o}s-P\'{o}sa property for $\mathcal{F}_H$ is well understood: Robertson and Seymour \cite{robertson86} proved that the class $\mathcal{F}_H$ has the vertex-Erd\H{o}s-P\'{o}sa property if and only if $H$ is planar. This implies that the vertex-Erd\H{o}s-P\'{o}sa property is closed under taking minors, which in turn implies that 

\begin{equation}\label{vertexEPPconnectiviy}
\begin{minipage}[c]{0.8\textwidth}\em
If for a graph $H$ the class $\mathcal{F}_H$ has the vertex-Erd\H{o}s-P\'{o}sa property, then so does the class $\mathcal{F}_C$ for every component $C$ of $H$.
\end{minipage}\ignorespacesafterend 
\end{equation}

For the edge-Erd\H{o}s-P\'{o}sa property, it is not known whether it is minor-closed or not, and it is not at all clear whether it should be. Thus, we tackle \eqref{vertexEPPconnectiviy} for the edge-Erd\H{o}s-P\'{o}sa property. We show that

\begin{theo} \label{con:theo:1connected}
If for a graph $H$ the class $\mathcal{F}_H$ has the edge-Erd\H{o}s-P\'{o}sa property, then so does $\mathcal{F}_C$ for every component $C$ of $H$.
\end{theo}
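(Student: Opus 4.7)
The plan is contrapositive: assuming that $\mathcal{F}_C$ fails the edge-Erd\H{o}s-P\'{o}sa property, I construct, for any alleged bound function $f$, a graph $G^*$ and an integer $k^*$ satisfying $\nu_H(G^*) < k^*$ but $\tau_H(G^*) > f(k^*)$, which contradicts edge-EP for $\mathcal{F}_H$. Throughout, $\nu_C(G)$ denotes the maximum number of edge-disjoint $C$-expansions in $G$ and $\tau_C(G)$ the minimum size of an edge set meeting every $C$-expansion (analogously for $H$).

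Write $H = C \sqcup L$ with $L := H - V(C)$. Given $f$, the failure of edge-EP for $\mathcal{F}_C$ supplies a graph $G_0$ and an integer $k$ with $\nu_C(G_0) < k$ and $\tau_C(G_0) > f(k)$. Take $m := f(k)+1$ disjoint copies $L_1,\ldots,L_m$ of $L$ and set
$$G^* \;=\; G_0 \sqcup L_1 \sqcup \cdots \sqcup L_m, \qquad k^* := k.$$
For the covering bound $\tau_H(G^*) > f(k)$ I would use a pigeonhole argument: any $X \subseteq E(G^*)$ of size at most $f(k) < m$ misses some copy $L_i$, and then any $C$-expansion $J \subseteq G_0 \setminus X$ pairs with $L_i$ into an $H$-expansion disjoint from $X$; hence $X \cap E(G_0)$ itself must cover all $C$-expansions of $G_0$, giving $|X| \geq \tau_C(G_0) > f(k)$. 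For the packing bound $\nu_H(G^*) < k$ I would extract from each $H$-expansion its \emph{$C$-part}: the connected subgraph formed by the branch sets and branch edges corresponding to the $C$-component of an $H$-minor model. Edge-disjoint $H$-expansions then give edge-disjoint $C$-expansions in $G^*$, so that $\nu_H(G^*) \leq \nu_C(G_0) < k$.

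The main difficulty is making sure that the extracted $C$-parts genuinely sit inside $G_0$. Since a $C$-part is connected it lies in one component of $G^*$, and it is forced into $G_0$ precisely when no copy $L_i$ can host it --- equivalently, only when no component of $L$ contains $C$ as a minor. Handling the remaining case, in which some component of $L$ does contain $C$, is the technical heart of the proof: I would replace the naive disjoint-union gadget by a more carefully engineered $L$-expansion $D$ which still pairs with any $C$-expansion in $G_0$ to build an $H$-expansion (preserving the covering argument) but admits no spare $C$-minor that could absorb a stray $C$-part. With such a $D$ in hand, the two-step argument above goes through verbatim.
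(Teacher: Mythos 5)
Your overall strategy coincides with the paper's: take the counterexample graph for the failing component, adjoin gadgets realizing the remaining components of $H$, get the covering bound by pigeonhole, and get the packing bound by forcing the ``$C$-part'' of every $H$-expansion into the original counterexample graph. The covering half of your argument is fine. But the packing half has a genuine gap, and it is exactly the step you defer: when some component of $L$ contains $C$ as a minor, you promise ``a more carefully engineered $L$-expansion $D$'' that ``admits no spare $C$-minor,'' without constructing it. As stated, that desideratum is unachievable: if a component $C'$ of $L$ has $C$ as a minor, then \emph{every} expansion of $L$ contains a $C$-minor, and moreover your covering argument needs $D$ to retain an $L$-expansion after deleting any $f(k)$ edges, so $D$ must contain many $C$-minors. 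The resolution cannot be ``no spare $C$-minor''; it has to be a capacity argument showing that the gadgets cannot simultaneously host expansions of $C$ \emph{and} of all the components of $L$ that require them.

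This is precisely what the paper's $C^\times$ construction delivers: each component $C'$ with $C\preceq C'$ is replaced by a graph in which every path between vertices of degree at least $3$ is multiplied into $r$ internally disjoint copies, so that $C'^\times$ survives any $r-1$ edge deletions \emph{while having exactly as many vertices of degree at least $3$ as $C'$}. The packing bound then follows by counting: if no $A$-expansion of an $H$-expansion lay in $A^*$, the branch sets of all degree-$\ge 3$ vertices of $A\cup\bigcup_{C\in\mathcal C}C$ would need distinct degree-$\ge 3$ vertices inside $\bigcup_{C\in\mathcal C}C^\times$, which has too few of them --- using Remark~\ref{con:rem:degree3verticesExist} to guarantee that $A$ itself contributes at least one such vertex (if it had none, $\mathcal{F}_A$ would already have the edge-Erd\H{o}s-P\'{o}sa property). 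You would need to supply both the gadget and this counting argument (or some substitute) before your ``goes through verbatim'' claim is justified; as it stands, the technical heart of the proof is missing.
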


%talk about reverse direction? open but likely to be true

Interestingly, Robertson and Seymour proved their result about the vertex-Erd\H{o}s-P\'{o}sa property of planar graphs in two steps: First, they proved it for every connected planar graph. In a second step, they lifted the connectivity requirement. Thus Theorem~\ref{con:theo:1connected} might also provide some help in verifying for which graphs $H$ the class $\mathcal{F}_H$ has the edge-Erd\H{o}s-P\'{o}sa property. For example, it might help to prove that $\mathcal{F}_H$ does not have the edge-Erd\H{o}s-P\'{o}sa property if $H$ has large treewidth (for larger or arbitrary maximum degree of $H$).

To this end, we also attempt a strengthening of Theorem~\ref{con:theo:1connected} which allows us to not only focus on connected, but $2$-connected graphs $H$. This, however, is not achieved in full generality, and we prove it only by imposing some additional assumptions (see Theorem~\ref{con:theo:2connected}).

For a graph $H$, a connected graph $G \subseteq H$ with at least one vertex $v \in V(G)$ with $d_H(v) \geq 3$ and an integer $r \in \N$, we define $G^{\times}$ to be the following graph: Starting with the empty graph, we add a copy of every vertex $v \in V(G)$ with $d_H(v) \geq 3$ to $G^{\times}$. For every non-trivial path $P$ of length $l$ in $G$ between two vertices such vertices, we add $r$ internally disjoint paths of length $\max\{l, 2\}$ between their corresponding copies in $G^\times$. Finally, for every path $P$ of length $l$ in $G$ between a vertex $u$ with $d_G(u) = 1$, $d_H(u) \leq 2$ and its closest vertex $v \in V(G)$ with $d_H(v) \geq 3$, we add $r$ paths of length $l$ which are disjoint except for the copy of $v$. %\footnote{When used later, we will choose $G$ in such a way that the case $u \in V(G)$ with $d_G(u) = 1$ but $d_H(u) = 2$ will not appear.}
See Figure~\ref{fig:Gtimesconstruction} for an example.

\begin{figure}[hbt] %G^\times construction
\centering
\begin{tikzpicture}[scale=0.85]
\tikzstyle{tinyvx}=[thick,circle,inner sep=0.cm, minimum size=1.3mm, fill=white, draw=black]

\node[hvertex,fill=hellgrau,label=below:$v$] (v) at (-4,0) {};
\node[hvertex,fill=hellgrau,label=below:$w$] (w) at (-2,0) {};
\node[tinyvx,label=left:$u$] (v1) at (-5,-1) {};
\node[hvertex,fill=hellgrau] (w1) at (-1,-1) {};
\node[hvertex,fill=hellgrau] (u1) at (-4,2) {};
\node[hvertex,fill=hellgrau] (u2) at (-2,2) {};

\draw[hedge] (v) -- (w) -- (w1);
\draw[hedge] (v) -- (u1) -- (u2) -- (w);
\draw[dashed] (v1) -- (v);

\node[hvertex,fill=hellgrau,label=below:$v^*$] (v*) at (2,0) {};
\node[hvertex,fill=hellgrau,label=below:$w^*$] (w*) at (4,0) {};

\node at (-6, 1) {$G$};
\node at ( 6, 1) {$G^\times$};

\tikzstyle{vvx}=[thick,circle,inner sep=0.cm, minimum size=1.5mm, fill=white, draw=black]
\def\blabb{0.3}

%%v1
%\begin{scope}[shift={(1,-1)},rotate=45]
%
%\foreach \i in {0,...,3}{
%  \node[smallvx] (xx\i) at (0,\i*\blabb-1.5*\blabb) {};
%  \draw[hedge] (v*) to (xx\i);
%}
%\end{scope}

%u1
\begin{scope}[shift={(2,2)},rotate=45]

\foreach \i in {0,...,3}{
  \node[smallvx] (xy\i) at (0,\i*\blabb-1.5*\blabb) {};
  \draw[hedge] (v*) to (xy\i);
}
\end{scope}

%u2
\begin{scope}[shift={(4,2)},rotate=315]

\foreach \i in {0,...,3}{
  \node[smallvx] (yx\i) at (0,\i*\blabb-1.5*\blabb) {};
  \draw[hedge] (xy\i) to (yx\i) to (w*);
}
\end{scope}

%w1
\begin{scope}[shift={(5,-1)},rotate=135]

\foreach \i in {0,...,3}{
  \node[smallvx] (yy\i) at (0,\i*\blabb-1.5*\blabb) {};
  \draw[hedge] (yy\i) to (w*);
}
\end{scope}

%v--w
\begin{scope}[shift={(3,0)},rotate=0]

\foreach \i in {0,...,3}{
  \node[smallvx] (zz\i) at (0,\i*\blabb-1.5*\blabb) {};
  \draw[hedge] (v*) to (zz\i) to (w*);
}
\end{scope}

\end{tikzpicture}
\caption{Construction of $G^\times$, with $u \in V(H) \sm V(G)$.}
\label{fig:Gtimesconstruction}
\end{figure}
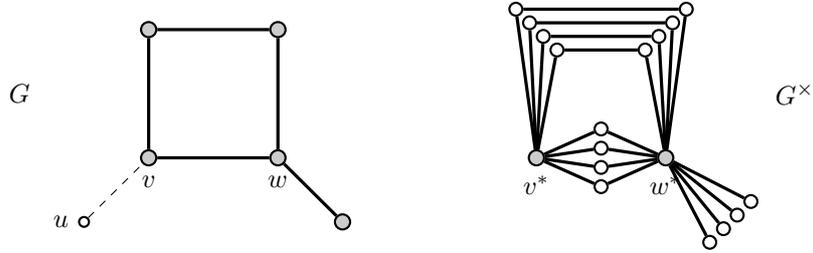

Let us check that for every edge set $X$ of size at most $r-1$, $G^\times - X$ contains a $G$-expansion. Every $v \in V(G)$ with $d_H(v) \geq 3$ can be mapped to its copy $v' \in V(G^\times)$. Every $u$--$v$~path between two such vertices can be mapped to one of its copies in $G^{\times}$ that is disjoint from $X$. For every vertex $u \in V(G)$ with $d_G(u) = 1$ and $d_H(u) \leq 2$, there is vertex $v \in V(G)$ with $d_H(v) \geq 3$ that is closest to $u$. Among all copies of the $u$--$v$~path, we pick a copy $P'$ that is disjoint from $X$ and map $P$ to $P'$ such that $v$ is mapped to $v'$. The prerequisite of $G$ being connected and containing a vertex $v$ with $d_H(v) \geq 3$ implies that every $v \in V(G)$ with $d_G(v) = d_H(v) = 2$ lies on a path in $G$ between vertices of degree other than $2$ in $H$, with at least one endvertex of the path having degree at least $3$ in $H$. We conclude that the above mapping yields a $G$-expansion in $G^\times - X$.

For $r \geq 3$, the number of vertices of degree at least~$3$ in $G^\times$ is the same as the number of vertices $v$ with $d_H(v) \geq 3$ in $G$. %This would not be the case for the $r$-fold construction in the previous chapters, which is precisely the reason why I modified the construction accordingly.

\begin{rem} \label{con:rem:degree3verticesExist}
Let $H$ be a connected graph for which $\mathcal{F}_H$ does not have the edge-Erd\H{o}s-P\'{o}sa property. Then $H$ contains vertices of degree at least~$3$.
\end{rem}

\begin{proof}
Suppose $H$ would only contain vertices of degree~$2$ or less. Since $H$ is connected, $H$ must be a cycle, a path or an isolated vertex. However, for all of those graphs, $\mathcal{F}_H$ is already known to have the edge-Erd\H{o}s-P\'{o}sa property.
%The proof for cycles can be found for example in \cite{diestel2006}.
%
%For paths and isolated vertices, we will give a straightforward proof here. Let $H$ be a path with $l$ edges, and let $G$ be a graph. We claim that for every $k  \in \N$, $G$ contains either $k$ edge-disjoint graphs each containing a minor isomorphic to $H$ or an edge-hitting set of size $l \cdot k$. Let $H'$ be graph in $G$ containing a minor isomorphic to $H$. Then $H'$ contains a path with at least $l$ edges. We add its edges to an edge hitting set $X$ and reiterate until $G-X$ contains no more $H$-expansions. This procedure yields either at least $k$ paths in $g$ with $l$ edges each, all of which are edge-disjoint, or an edge hitting set $X$ of size at most~$l \cdot k$.
%This proves that paths have the edge-Erd\H{o}s-P\'{o}sa property.
%
%Now let $H$ be an isolated vertex. Every graph $G$ contains a vertex $v$. Choosing $(\{v\},\es)$ for every embedding of $H$ results in arbitrarily many edge-disjoint embeddings of $H$. This proves that isolated vertices have the edge-Erd\H{o}s-P\'{o}sa property.
\end{proof}

For two graphs $A$ and $B$, we define $\preceq$ by
\[
A \preceq B \Leftrightarrow A \text{ is a minor of } B.
\]
Similarly, we define $\not\preceq$ by
\[
A \not\preceq B \Leftrightarrow A \text{ is not a minor of } B.
\]

\section{1-Connectivity}

\begin{proof}[Proof of Theorem~\ref{con:theo:1connected}]
Let $A$ be some component of $H$ such that $\mathcal{F}_A$ does not have the edge-Erd\H{o}s-P\'{o}sa property. Thus, there exists an integer $k \in \N$ such that for every $r \in \N$, there exists a graph $A^*_r$ such that $A^*_r$ neither contains $k$ edge-disjoint expansions of $A$ nor an edge set $X$ of size at most~$r-1$ such that $A^*_r-X$ contains no expansion of $A$.
We separate the other components of $H$ into two disjoint sets $\mathcal{B}$ and $\mathcal{C}$, which we define by
\begin{align*}
\mathcal{B} &= \{B \text{ component of }H \,|\, A \not\preceq B\} \text{ and} \\
\mathcal{C} &= \{C \text{ component of }H \,|\, A \preceq C\} \setminus \{A\}.
\end{align*}
To prove that $\mathcal{F}_H$ does not have the edge-Erd\H{o}s-P\'{o}sa property, let $r$ be given. We prove that there exists a graph $H^*$ that contains neither $k$ edge-disjoint $H$-expansions nor an edge set $X$ of size at most~$r-1$ such that $H^*-X$ contains no $H$-expansion.
We define $H^*$ to be the disjoint union of
\begin{itemize}
\item $A^* = A^*_r$,
\item for every $B \in \mathcal{B}$: $r$ distinct copies of $B$ and
\item for every $C \in \mathcal{C}$: one $C^\times$. 
\end{itemize}

First we show that $H^*$ does not contain an edge-hitting set meeting all $A$-expansions. Let $X \subseteq E(H^*)$ be an edge set of size at most~$r-1$. We claim that $H^* - X$ still contains an $H$-expansion: Indeed, there is an $A$-expansion in $A^* - X$ by choice of $A^*$. For every $B \in \mathcal{B}$, at least one of the $r$ copies of $B$ in $H$ is disjoint from $X$. Thus, there is a $B$-expansion in that copy. Finally, for every $C \in \mathcal{C}$, there is a $C$-expansion in $C^\times - X$ by construction of $C^\times$. Together, this yields an $H$-expansion in $H^*-X$.

We claim
\begin{equation}\label{AinA*for1connected}
\begin{minipage}[c]{0.8\textwidth}\em
\begin{center}
Every $H$-expansion in $H^*$ contains an $A$-expansion in $A^*$.
\end{center}
\end{minipage}\ignorespacesafterend 
\end{equation} 

Note that \eqref{AinA*for1connected} finishes the proof of the theorem: Indeed, by choice of $A^*$, there can be no $k$ edge-disjoint $A$-expansions in $A^*$.
To prove the claim, consider an expansion $H'$ in $H^*$, and suppose that \eqref{AinA*for1connected} is false for $H'$.

Since every component of $H'$ is connected, it must be contained in a single component of $H^*$. Further note that every expansion of a $C \in \mathcal{C}$ in some component of $H^*$ contains an expansion of $A$ by definition of $\mathcal{C}$. Thus, by definition of $\mathcal{B}$, no $A$-expansion (and thus no $C$-expansion for any $C \in \mathcal{C}$) can be contained in some copy of some $B \in \mathcal{B}$. On top of that, if an $A$-expansion (or a $C$-expansion for any $C \in \mathcal{C}$) is embedded in $A^*$, this proves the above claim. Thus, suppose all $A$-expansion in $H'$ (and thus all $C$-expansions for every $C \in \mathcal{C}$) are contained in $\bigcup\limits_{C \in \mathcal{C}} C^\times$. Every $H$-expansion in $H^*$ contains at least one vertex of degree~$\geq 3$ in $H^*$ for every vertex of degree~$\geq 3$ in $H$. However, by construction of $C^\times$, $\bigcup\limits_{C \in \mathcal{C}} C^\times$ contains no more vertices of degree~$\geq 3$ than $\bigcup\limits_{C \in \mathcal{C}} C$. Since $A$ contains vertices of degree $\geq 3$ by Remark~\ref{con:rem:degree3verticesExist} and the branch sets of all vertices in $A \cup \left(\bigcup\limits_{C \in \mathcal{C}} C\right)$ must be contained in $\bigcup\limits_{C \in \mathcal{C}} C^\times$, this is a contradiction. Thus \eqref{AinA*for1connected} holds, proving the theorem.
\end{proof}

%Replace embedding with expansion?

\section{2-Connectivity}

If we want to prove that for some graph $H$, $\mathcal{F}_H$ does not have the edge-Erd\H{o}s-P\'{o}sa property, the above theorem implies that it suffices to check the components of $H$ individually. Thus, without loss of generality, we can assume that $H$ is $1$-connected.
To improve this to $2$-connectivity of $H$, we need two additional assumptions.

First, we define a graph property $\mathcal{P}$ to be \emph{hereditary} if $\bar{\mathcal{P}}$ is closed under taking minors, that is, for every graph $H$ without property $\mathcal{P}$, no minor of $H$ has property $\mathcal{P}$. An example for a hereditary property would be large treewidth: If a graph $H$ does not have treewidth at least $t$, then no minor of $H$ has treewidth at least $t$.

Second, for a block $A$ of a graph $H$, let $S$ be the set of cutvertices of $H$ that lie in $A$. We say that there is a \emph{generic counterexample} for $A$ if there exists some $k \in \N$ such that for every $r \in \N$, there is a graph $A^*$ with the following properties: There are no $k$ edge-disjoint $A$-expansions in $A^*$. Furthermore, for every $s \in S$, there is a $s' \in V(A^*)$ such that for every edge set $X$ of size at most $r-1$, there is an embedding of $A$ in $A^*$ such that for every vertex $s \in S$, the branch set $B_s$ contains $s'$. (Together, this implies that $\mathcal{F}_A$ does not have the edge-Erd\H{o}s-P\'{o}sa property.)
Every known construction that shows that some class $\mathcal{F}_A$ does not have the edge-Erd\H{o}s-P\'{o}sa property is a generic counterexample.

\begin{theo} \label{con:theo:2connected}
Let $\mathcal{P}$ be a hereditary graph property and let $H$ be a graph that contains a block with property $\mathcal{P}$.
Furthermore, for every block $A$ of $H$ with property $\mathcal{P}$, let there be a generic counterexample.

Then $\mathcal{F}_H$ does not have the edge-Erd\H{o}s-P\'{o}sa property.
\end{theo}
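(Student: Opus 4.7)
The strategy is to adapt the argument of Theorem~\ref{con:theo:1connected} to the block decomposition of~$H$. Pick a block $A$ of $H$ with property~$\mathcal{P}$, which exists by hypothesis, and let $k$ be the integer from $A$'s generic counterexample. For given $r \in \N$, let $A^*$ be the generic counterexample, with distinguished vertices $s'_1,\dots,s'_p \in V(A^*)$ for the cutvertices $s_1,\dots,s_p$ of $H$ in~$A$. By Remark~\ref{con:rem:degree3verticesExist}, $A$ has a vertex of degree at least~$3$, so $A$ is $2$-connected and $d_H(s_i) \geq d_A(s_i) + 1 \geq 3$ for every~$i$.

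For each~$i$, let $H_i$ be the subgraph of $H$ consisting of $s_i$ together with all vertices of~$H$ reachable from~$s_i$ without crossing $V(A) \setminus \{s_i\}$. Since $d_H(s_i) \geq 3$, the $G^\times$-construction applied to~$H_i$ (viewed as a connected subgraph of~$H$) produces a graph $(H_i)^\times$ containing a copy~$s''_i$ of~$s_i$. Define $H^*$ to be the disjoint union of $A^*$ and the $(H_i)^\times$'s, with each $s''_i$ identified with~$s'_i$.

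The no-small-hitting-set property is immediate: for $X \subseteq E(H^*)$ with $|X| \leq r-1$, the generic counterexample provides an $A$-expansion in $A^* - X$ with $s'_i \in B_{s_i}$ for each~$i$, the $G^\times$-construction provides an $H_i$-expansion in $(H_i)^\times - X$ sending~$s_i$ to~$s''_i = s'_i$, and these agree at the~$s'_i$ and glue together into an $H$-expansion in $H^* - X$. The remaining task reduces to the key claim: \emph{every $H$-expansion in $H^*$ contains an $A$-expansion lying entirely in~$A^*$}. Granted the claim, $k$~edge-disjoint $H$-expansions in $H^*$ would yield $k$~edge-disjoint $A$-expansions in~$A^*$, contradicting the choice of $A^*$.

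I plan to prove the key claim via a degree-$3$ counting argument modelled on the $C^\times$-case of Theorem~\ref{con:theo:1connected}. The $G^\times$-construction gives $|V_3((H_i)^\times)| = |V_3(H) \cap V(H_i)|$, and each $s'_i$ has $H^*$-degree at least~$r \geq 3$. Any $H$-expansion picks, for each $v \in V(H)$ with $d_H(v) \geq 3$, a distinct vertex of $V_3(H^*)$ inside~$B_v$. If the $A$-expansion were not contained in~$A^*$, some branch set of a vertex of~$V(A)$ would meet $V((H_i)^\times) \setminus \{s'_i\}$; a propagation argument along the edges of~$A$, using that $s'_i$ is a cutvertex of~$H^*$ and that branch sets are disjoint connected subgraphs, would force essentially all branch sets of~$V(A)$ into~$V((H_i)^\times)$. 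Combined with the branch sets of~$V(H_i)$, the number of required degree-$\geq 3$ vertices there would exceed $|V_3((H_i)^\times)|$, using that $|V_3(A)| \geq 2$ (a $2$-connected non-cycle graph has at least two vertices of degree~$\geq 3$). The main obstacle is making this propagation precise, in particular handling branch sets that may straddle the cutvertex~$s'_i$.
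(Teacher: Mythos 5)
Your overall strategy (multiply the periphery, keep $A^*$ as the bottleneck, finish by counting degree-$3$ vertices) is the right one, but two choices in your construction break the key claim, and these are exactly the points where the paper's proof does more work. First, you take $A$ to be an \emph{arbitrary} block with property $\mathcal{P}$; the paper takes $A$ to be a \emph{leaf} of the minimal subtree of the block tree spanning all blocks with property $\mathcal{P}$. The leaf choice is what makes the count close: it guarantees that $A$ meets the rest of the ``tight'' part of the construction in exactly one cutvertex, so at least one of the (at least two) degree-$3$ vertices of $A$ has no degree-$3$ vertex of the periphery left to occupy. If $A$ is an internal block whose only two degree-$3$ vertices are both cutvertices attached to blocks containing $A$ as a minor, your count is exactly tight and yields no contradiction.

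Second, applying the $G^\times$ construction to the whole pendant piece $H_i$ is unsound, because multiplying paths creates $A$-minors that were not there before: a bundle of $r\ge 3$ internally disjoint $u$--$v$ paths contains every theta graph as a minor, so if $A$ has only two degree-$3$ vertices, $(H_i)^\times$ can contain $A$-expansions even when no block of $H_i$ does. One can then route the $A$-part of an $H$-expansion into such a bundle and the remaining blocks of $H_i$ into $A^*$ --- nothing in your argument confines their branch sets to $(H_i)^\times$, and $A^*$ has plenty of degree-$3$ vertices to offer them --- so that the trace of the expansion inside $A^*$ contains no $A$-expansion and the inequality ``required $>$ available'' never materialises. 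The paper avoids both problems by splitting the periphery three ways: blocks $C$ with $A\preceq C$ get $C^\times$ (if such a $C$ were embedded in $A^*$ one would already be done), blocks $B$ with $A \not\preceq B$ lying on tree-paths between those get $B^\times$ (and cannot be embedded in $A^*$ by a block-tree argument that again uses the leaf property), and everything else is replaced by $r$ \emph{disjoint copies}, which can never host the $2$-connected graph $A$ because none of their blocks has an $A$-minor. Only then is the degree-$3$ count exact, and $A$'s surplus degree-$3$ vertex gives the contradiction. Two smaller points: your $H_i$'s miss the components of $H$ other than the one containing $A$, so you still need the reduction via Theorem~\ref{con:theo:1connected}; and ``$A$ has a vertex of degree at least~$3$, so $A$ is $2$-connected'' is backwards --- $A$ is $2$-connected because it is a block and not a $K_2$, the latter because $\mathcal{F}_{K_2}$ has the edge-Erd\H{o}s-P\'{o}sa property while the generic-counterexample hypothesis implies $\mathcal{F}_A$ does not.
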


An example for an application of Theorem*\ref{con:theo:2connected} could be the following. Suppose we want to show that:
\begin{equation}\label{con:application:treewidth}
\begin{minipage}[c]{0.8\textwidth}\em
For every graph $H$ of treewidth at least $10^{100}$, $\mathcal{F}_H$ does not have the edge-Erd\H{o}s-P\'{o}sa property.
\end{minipage}\ignorespacesafterend 
\end{equation} 

Having large treewidth is a hereditary graph property. Assume we were able to show that for every $2$-connected graph $H$ of treewidth at least $10^{100}$, $\mathcal{F}_H$ does not have the edge-Erd\H{o}s-P\'{o}sa property. Then we will most likely do that by giving a generic counterexample. Thus we can apply Theorem~\ref{con:theo:2connected} to drop the connectivity requirement and we obtain~\eqref{con:application:treewidth}. Now let us prove Theorem~\ref{con:theo:2connected}.

\begin{proof}
We can assume $H$ to be $1$-connected: Indeed, if $H$ contains a block $A$ with property $\mathcal{P}$, then there is a component $Q$ of $H$ that contains $A$. Furthermore, if all block of $H$ with property $\mathcal{P}$ allow for a generic counterexample, then this includes the blocks of $Q$. If we are able to prove that $\mathcal{F}_Q$ does not have the edge-Erd\H{o}s-P\'{o}sa property, then using Theorem~\ref{con:theo:1connected}, we conclude that $\mathcal{F}_H$ does not have the edge-Erd\H{o}s-P\'{o}sa property.

We consider the block tree $T$ of $H$. Let $T_\mathcal{P}$ be the minimal subtree of $T$ that contains all blocks with property $\mathcal{P}$. We pick $A$ to be a leaf of $T_\mathcal{P}$. Note that $A$ is a block of $H$ that has the property $\mathcal{P}$. Observe that $A$ is not a trivial block, i.e. $A$ is not a $K_2$ due to Remark~\ref{con:rem:degree3verticesExist}.

We define
\begin{align*}
\mathcal{C} &= \{C \text{ block of }H \,|\, A \preceq C\} \setminus \{A\}.
\end{align*}

Since $\mathcal{P}$ is hereditary, it holds for every $C \in \mathcal{C}$ that $C$ has the property $\mathcal{P}$, too. 
Let $T_\mathcal{C}$ be the minimal subtree of $T$ that contains all blocks of $\mathcal{C} \cup \{A\}$. We observe that $T_\mathcal{C}$ is a subgraph of $T_\mathcal{P}$. Thus, 
\begin{equation}\label{AisLeafOfT_C}
\begin{minipage}[c]{0.8\textwidth}\em
\begin{center}
$A$ is a leaf of $T_\mathcal{C}$.
\end{center}
\end{minipage}\ignorespacesafterend 
\end{equation} 

%We will distinguish between the blocks on $T_\mathcal{C}$ (those between $A$ and some $C \in \mathcal{C}$, both $A$ and $C$ included) and the others (those not required to connect $A$ with some $C \in \mathcal{C}$). 
We define
\begin{align*}
\mathcal{B} = &\{B \text{ block of }H \,|\, A \not\preceq B\} \cap V(T_\mathcal{C}) \text{ and} \\
\mathcal{D} = &\{D \text{ component of } \bigcup_{\substack{B \text{ block of } H \\ B \not\in V(T_\mathcal{C})}} B\}.
\end{align*}

Note that since $T$ is a tree, $T - T_C$ is a forest. For each component $T'$ in $T - T_C$, $V(T')$ are the blocks and cutvertices of one element of $\mathcal{D}$.

To show that $H$ does not have the edge-Erd\H{o}s-P\'{o}sa property, let $r \geq 3$ be some integer. We define our counterexample graph $H^*$ to be the union of
\begin{itemize}
\item $A^*$,
\item for every $C \in \mathcal{C}$: one $C^\times$,
\item for every $D \in \mathcal{D}$: $r$ distinct copies of $D$,
\item for every non-trivial $B \in \mathcal{B}$: one $B^\times$ and
\item for every component $P$ of $\bigcup\limits_{\substack{B \in \mathcal{B} \\ B = K_2}} B$: one $P^\times$.
\end{itemize}

We pick the above graphs to be disjoint except for those vertices which are copies of the same vertex $v \in H$, which we identify with each other in $H^*$, too. In $A^*$, we pick the vertex $s'$ for every $s \in S$ and identify it with all copies of $s$. %For $D \in \mathcal{D}$, there is exactly one vertex $v \in V(D)$ that is also contained in some block of $B \in \mathcal{B} \cup \mathcal{C} \cup \{A\}$. For every copy of $D$, we identify the copy of $v$ with the corresponding vertex in $B^\times$ if $B \in \mathcal{B} \cup \mathcal{C}$, or with some $s' \in R_v$ if $B = A$ (and thus $v \in S$) \
Note that for all blocks $B \in \mathcal{B}$, there is a path $P$ in $H$ whose endvertices are in a block in $\{A\} \cup \mathcal{C}$ and $P$ contains an edge of $B$. With $A$ and $C$ being non-trivial blocks for all $C \in \mathcal{C}$, their cutvertices have degree at least $3$ in $H$. Thus the union of all trivial blocks in $B$ is a collection of paths $P$ whose endvertices are in non-trivial blocks and have degree at least $3$ in $H$.
We denote the union of all $P^\times$ and all $B^\times$ for every non-trivial $B \in \mathcal{B}$ by $\mathcal{B}^\times$. Note that every vertex $v'$ in $\mathcal{B}^\times \cup \bigcup_{C \in \mathcal{C}} C^\times$ with $d_{H^*}(v') \geq 3$ is the copy of some vertex $v$ in $\bigcup_{B \in \mathcal{B}} B \cup \bigcup_{C \in \mathcal{C}} C$ with $d_H(v) \geq 3$.

Now we show that $H^*$ does not contain an edge-hitting set meeting all $A$-expansions. Let $X \subseteq E(H^*)$ be an edge set of size at most~$r-1$. We claim that $H^* - X$ still contains an embedding of $H$: Indeed, we can embed $A$ in $A^* - X$ such that for every $s \in S$, its branch set $B_s$ contains $s'$ by choice of $A^*$. For every $D \in \mathcal{D}$, at least one of the $r$ copies of $D$ is disjoint from $X$. Thus, we can embed $D$ in that copy. For every non-trivial $B \in \mathcal{B} \cup \mathcal{C}$, we can embed $B$ in $B^\times - X$ by construction of $B^\times$. We observed above that trivial blocks of $B$ are contained in some paths between vertices of degree at least $3$, which can be embedded in their copy in $H^* - X$. Together, this yields an embedding of $H$ in $H^* - X$ by construction of $H^*$.

It remains to show that there are no $k$ edge-disjoint embeddings of $H$ in $H^*$. For this, we claim:
\begin{equation}\label{AinA*for2connected}
\begin{minipage}[c]{0.8\textwidth}\em
\begin{center}
Every $H$-expansion in $H^*$ contains an $A$-expansion in $A^*$.
\end{center}
\end{minipage}\ignorespacesafterend 
\end{equation}

Note that \eqref{AinA*for2connected} proves the theorem: Indeed, by choice of $A^*$, there can be no $k$ edge-disjoint embeddings of $A$ in $A^*$.
Let us prove \eqref{AinA*for2connected}. Way say that a block $B$ of $H$ is \emph{embedded in a block $B^*$ of $H^*$}, when for every $v \in V(B)$ with $d_H(v) \geq 3$, the branch set $B_v$ contains a vertex $v^* \in V(B^*)$ with $d_{B^*} \geq 3$. In this sense, every block $B$ of $H$ is embedded in a single block $B^*$ of $H^*$.
Note that every embedding of a $C \in \mathcal{C}$ in some block of $H^*$ contains an embedding of $A$ by definition of $\mathcal{C}$. Thus neither $A$ nor any $C \in \mathcal{C}$ can be embedded in any copy of some $D \in \mathcal{D}$. Additionally, if some $C \in \mathcal{C}$ is embedded in $A^*$, this includes an embedding of $A$ in $A^*$, which was what we wanted.
Thus, we may assume that neither $A$ nor any $C \in \mathcal{C}$ is embedded in $A^*$.

We conclude that $A \cup \bigcup\limits_{C \in \mathcal{C}} C$ is embedded in $\mathcal{B}^\times \cup \bigcup\limits_{C \in \mathcal{C}} C^\times$. 
Let $B \in \mathcal{B}$. By definition of $\mathcal{B}$, $B$ is on the unique path in $T$ that connects two blocks $C_1, C_2 \in \mathcal{C} \cup \{A\}$. Since $A$ is a leaf of $T_\mathcal{C}$ by \eqref{AisLeafOfT_C} and we assumed that neither $C_1$ nor $C_2$ are embedded in $A^*$, $B$ cannot be embedded in $A^*$. For every $D \in \mathcal{D}$, there is a cutvertex separating $D$ from all $C \in \{A\} \cup \mathcal{C}$. Therefore, $B$ cannot be embedded in $D$.
We conclude that $B$ is embedded in $\mathcal{B}^\times \cup \bigcup\limits_{C \in \mathcal{C}} C^\times$.

To sum up, $A \cup \bigcup\limits_{B \in \mathcal{B}} B \cup \bigcup\limits_{C \in \mathcal{C}} C$ is embedded in $\mathcal{B}^\times \cup \bigcup\limits_{C \in \mathcal{C}} C^\times$. However, the number of vertices $v$ in $\bigcup\limits_{B \in \mathcal{B}} B \cup \bigcup\limits_{C \in \mathcal{C}} C$ with $d_H(v) \geq 3$ is the same as the number of vertices $v'$ in $\mathcal{B}^\times \cup \bigcup\limits_{C \in \mathcal{C}} C^\times$ with $d_{H^*}(v') \geq 3$.
%However, the number of vertices of degree at least $3$ in $\bigcup\limits_{C \in \mathcal{C}} C$ is the same as in $\bigcup\limits_{C \in \mathcal{C}} C^\times$. We noted above that the only vertices of degree at least $3$ in $\mathcal{B}^\times$ that are not copies of vertices of degree at least $3$ in $\bigcup\limits_{B \in \mathcal{B}} B$ are cutvertices of $A$ or some $C \in \mathcal{C}$. The latter are already included in $\bigcup\limits_{C \in \mathcal{C}} C^\times$. Furthermore, there is at most one cutvertex with $A$ since $A$ is a leaf of $T_C$. Thus the number of vertices of degree at least $3$ in $\bigcup\limits_{B \in \mathcal{B}} B \cup \bigcup\limits_{C \in \mathcal{C}} C$ is at most one vertex less than in $\mathcal{B}^\times \cup \bigcup\limits_{C \in \mathcal{C}} C^\times$.

By by Remark~\ref{con:rem:degree3verticesExist}, $A$ contains at least one vertex $v$ with $d_A(v) \geq 3$.
Since $A$ is $2$-connected, we conclude that it must contain at least two vertices $v$ with $d_A(v) \geq 3$.
Since $A$ is a leaf of $T_C$, it shares exactly one vertex with $\bigcup\limits_{B \in \mathcal{B}} B \cup \bigcup\limits_{C \in \mathcal{C}} C$. Thus, $A \sm \left( \bigcup\limits_{B \in \mathcal{B}} B \cup \bigcup\limits_{C \in \mathcal{C}} C \right)$ contains at least one vertex $v$ with $d_H(v) \geq d_A(v) \geq 3$.
But then it is impossible to embed $A \cup \bigcup\limits_{B \in \mathcal{B}} B \cup \bigcup\limits_{C \in \mathcal{C}} C$ in $\mathcal{B}^\times \cup \bigcup\limits_{C \in \mathcal{C}} C^\times$.

Thus, Claim~\eqref{AinA*for1connected} holds, proving the theorem.
\end{proof}

\bibliographystyle{amsplain}
%\bibliography{graphs}

\bibliography{chapters/literature}{}
%\bibliographystyle{plain}

%\begin{thebibliography}{9}
%
%\bibitem{bruhn18}
%	H. Bruhn, M. Heinlein, and F. Joos,
%	\textit{The edge-Erd\H{o}s-P\'{o}sa property}, \\
%	arXiv:1809.11038v1,
%	2018.
%	
%\bibitem{bruhn17}
%	H. Bruhn, M. Heinlein, and F. Joos, \\
%	\textit{Frames, $A$-paths and the Erd\H{o}s-P\'{o}sa property}, \\
%	arXiv:1707.02918v3,
%	2017.
%	
%\bibitem{robertson86}
%	N. Robertson and P. Seymour,
%	\textit{Graph minors. V. Excluding a planar graph}, \\
%	J. Combin. Theory (Series B) \textbf{41} (1986),
%	92–114.
%	
%\bibitem{kakimura12}	
%	Naonori Kakimura, Ken-ichi Kawarabayashi, and Yusuke Kobayashi. 2012. \\ 		\textit{Erdös-Pósa property and its algorithmic applications: parity constraints, subset feedback set, and subset packing.} \\
%	In Proceedings of the twenty-third annual ACM-SIAM symposium on Discrete algorithms (SODA '12). Society for Industrial and Applied Mathematics, Philadelphia, PA, USA, 1726-1736. 
%	
%\end{thebibliography}

\end{document}